\newcommand{\pw}[1]{\langle #1 \rangle} 
\newcommand{\ZZ}{\mathbb{Z}_{\ge 0}}
\newtheorem{theorem}{Theorem}
\newtheorem{lemma}[theorem]{Lemma}
\newtheorem{example}[theorem]{Example}
\newtheorem{conjecture}[theorem]{Conjecture}
\theoremstyle{definition}
\newtheorem{definition}[theorem]{Definition}
\newtheorem{remark}[theorem]{Remark}
\begin{document}

\begin{frontmatter}

\title{Pak-Stanley labeling for Central Graphical Arrangements.}
\begin{aug}
  \author{\fnms{Mikhail } \snm{Mazin}\thanksref{t2}\ead[label=e1]{mmazin@math.ksu.edu}}
  \thankstext{t2}{The author is supported by the Simons Foundation Collaboration Grant 524324}
  \address{Mathematics Department, Kansas State University.
\\ Cardwell Hall, Manhattan, KS 66506\\ \printead{e1}}
\author{\fnms{Joshua} \snm{Miller}\ead[label=e4]{jgm019@ksu.edu}}
\address{Mathematics Department, Kansas State University.
\\ Cardwell Hall, Manhattan, KS 66506\\\printead{e4}}

\end{aug}

\begin{abstract}
The original Pak-Stanley labeling was defined by Pak and Stanley as a bijective map from the set of regions of an extended Shi arrangement to the set of parking functions. This map was later generalized to other arrangements associated with graphs and directed multigraphs . In these more general cases the map is no longer bijective. However, it was shown that it is surjective to the set of the $G$-parking functions, where $G$ is the multigraph associated with the arrangement.

This leads to a natural question:  when is the generalized Pak-Stanley map bijective? In this paper we answer this question in the special case of central hyperplane arrangements, i.e. the case when all the hyperplanes of the arrangement pass through a common point.
\end{abstract}

\begin{keyword}[class=AMS]
\kwd[Primary ]{05A17}
\kwd{05C30}
\kwd{52C35}
\end{keyword}

\begin{keyword}
\kwd{Parking Functions}
\kwd{Hyperplanes}
\end{keyword}


\end{frontmatter}

\section*{Introduction.}

Let $V\subset \mathbb{R}^n$ be given by $x_1+\ldots+x_n=0.$ Consider an arrangement $\mathcal{A}$ of affine hyperplanes in $V,$ such that every hyperplane of $\mathcal{A}$ is of the form $H_{i,j}^a:=\{x_i-x_j=a\}$ for some $i,j\in\{1,\ldots n\}$ and $a>0.$ Let $G_\mathcal{A}$ be the associated directed multigraph, defined as follows. The set of vertices of $G_\mathcal{A}$ is $\{1,\ldots,n\},$ and the edge $i\to j$ has multiplicity 

$$
m_{ij}:=\#\{a\in\mathbb{R}_{>0}|H_{i,j}^a\in\mathcal{A}\}.
$$

Note that one gets $m_{ij}+m_{ji}$ hyperplanes parallel to $\{x_i=x_j\}$ in $\mathcal{A},$ $m_{ij}$ of them on one side of the origin, and $m_{ji}$ of them on the other. Note also that the multigraph $\mathcal{G}_\mathcal{A}$ does not determine the combinatorial type of the arrangement $\mathcal{A},$ as one can shift the hyperplanes by changing the constants on the right hand sides of the equations without changing the graph. 

\begin{definition}
We will call the arrangements of the type described above the \textit{multigraphical arrangements.}
\end{definition}

The generalized Pak-Stanley labeling of the regions (connected components of the complement) of a multigraphical arrangement was defined in \cite{M17}:

\begin{definition}
Let $R$ be a region of $\mathcal{A}.$ Let $\mathcal{A}_R\subset \mathcal{A}$ be the subset consisting of the hyperplanes that separate $R$ from the origin.  We define the label $\lambda_R$ to be the function $\lambda_R:\{1,\ldots,n\}\to\mathbb{Z}_{\ge 0}$ given by the following formula: 

$$
\lambda_R(i):=\#\{(a,j)|a\in \mathbb{R}_{>0}, j\in\{1,\ldots,n\}, \text{and}\  H^a_{i,j}\in\mathcal{A}_R\}.
$$
In other words, $\lambda_R(i)$ equals to the number of hyperplanes of the arrangement $\mathcal{A}$ of the form $H_{i,j}^a$ separating the region $R$ from the origin. (Note that here $i$ is fixed, but $j$ and $a$ might vary.) 
\end{definition}

We will use the notation $\pw{\lambda(1),\ldots,\lambda(n)}$ for a label $\lambda.$ The region $R_0$ containing the origin is called the \textit{fundamental region.} It is the only region labeled by $\pw{0,\ldots,0}.$ Note that the labeling can be defined inductively: as one crosses a hyperplane $H_{ij}^a=\{x_i-x_j=a>0\}$ in the direction \textbf{away} from the origin, the $i$th  component of the label is increased by one, while the rest of the components remain unchanged.

\begin{definition}
Let $G$ be a directed multigraph on a vertex set $\{1,\ldots,n\}.$ A function $\lambda:\{1,\ldots,n\}\to\ZZ$ is called a {\it $G$-parking function} if for any non-empty subset $I\subset\{1,\ldots,n\}$ there exists a vertex $i\in I$ such that the number of edges $(i\to j)\in E_G$, counted with multiplicity, such that $j\notin I$ is greater than or equal to $\lambda(i).$
\end{definition}

The following results were proved in \cite{HP12} and \cite{M17}:

\begin{theorem}[\cite{HP12,M17}]
Let $R$ be any region of a multigraphical arrangement $\mathcal{A}.$ Then the corresponding label $\lambda_R$ is a $\mathcal{G}_\mathcal{A}$-parking function.
\end{theorem}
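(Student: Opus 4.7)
The plan is to verify the defining condition of a $\mathcal{G}_\mathcal{A}$-parking function directly from the definition of $\lambda_R$. Fix a nonempty subset $I \subseteq \{1,\ldots,n\}$. My goal is to exhibit an index $i \in I$ with $\lambda_R(i) \leq \sum_{j \notin I} m_{ij}$, which is exactly the number of edges $i \to j$ in $\mathcal{G}_\mathcal{A}$ whose target lies outside $I$.

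The key geometric input is a \emph{clever choice of} $i$ depending on $R$: pick any point $p = (p_1, \ldots, p_n)$ in $R$, and let $i \in I$ be an index that minimizes $p_i$ over $\{p_k : k \in I\}$ (ties broken arbitrarily). I claim that with this choice, no hyperplane of the form $H_{i,j}^a$ with $j \in I$ belongs to $\mathcal{A}_R$. Indeed, for such a hyperplane to separate $R$ from the origin, the values $0$ (at the origin) and $p_i - p_j$ (at $p$) must lie on opposite sides of $a$. But $a > 0$ and, by minimality of $p_i$ in $I$, also $p_i - p_j \leq 0$; both sit strictly below $a$, so the hyperplane does not separate $R$ from the origin.

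Consequently, every hyperplane $H_{i,j}^a \in \mathcal{A}_R$ contributing to $\lambda_R(i)$ has $j \notin I$. Summing with multiplicities, $\lambda_R(i) \leq \sum_{j \notin I} m_{ij}$, which is the desired bound. This shows $\lambda_R$ is a $\mathcal{G}_\mathcal{A}$-parking function.

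I do not anticipate a serious obstacle here; once the minimizer idea is in hand, the verification reduces to a one-line sign check. The only point that requires care is the orientation convention: the component $\lambda_R(i)$ records crossings of hyperplanes $H_{i,j}^a$ whose \textbf{first} index is $i$, and moving away from the origin across such a hyperplane corresponds to increasing $x_i - x_j$ past $a > 0$. This is why the \emph{minimum} (and not the maximum) coordinate in $I$ is the right choice; choosing $i$ to maximize $p_i$ in $I$ would fail, since then $p_i - p_j \geq 0$ is compatible with $p_i - p_j > a$ for some hyperplane $H_{i,j}^a$ with $j \in I$.
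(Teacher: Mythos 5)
Your proof is correct. The paper itself states this theorem only by citation to \cite{HP12,M17} and gives no proof, but your argument --- choosing $i\in I$ to minimize $p_i$ over a point $p\in R$, so that every separating hyperplane $H_{i,j}^a$ counted by $\lambda_R(i)$ must have $j\notin I$ --- is exactly the standard argument used in those references.
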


\begin{theorem}[\cite{HP12,M17}]
Let $\mathcal{A}$ be a multigraphical arrangement, and let $\lambda$ be any $\mathcal{G}_\mathcal{A}$-parking function. Then there exists a region $R$ of $\mathcal{A},$ such that $\lambda_R=\lambda.$
\end{theorem}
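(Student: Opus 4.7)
The plan is to induct on the weight $|\lambda| := \sum_{i=1}^n \lambda(i)$. For the base case $|\lambda|=0$, the fundamental region $R_0$ has label $\langle 0,\ldots,0\rangle$ by definition.

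A preliminary combinatorial observation: decrementing any positive coordinate of a $G_\mathcal{A}$-parking function again yields a $G_\mathcal{A}$-parking function. Indeed, for any nonempty $I\subset\{1,\ldots,n\}$, a witness $j^*\in I$ for $\lambda$ (satisfying $|\{k\notin I: j^*\to k\}|\ge \lambda(j^*)$) satisfies $(\lambda-e_i)(j^*)\le\lambda(j^*)\le |\{k\notin I: j^*\to k\}|$, so $j^*$ remains a witness for $\lambda-e_i$. Thus for any $i$ with $\lambda(i)>0$, the inductive hypothesis supplies a region $R'$ with $\lambda_{R'}=\lambda-e_i$.

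To complete the inductive step, the goal is to select $i$ carefully and produce a region $R$ adjacent to $R'$ across a wall $H^a_{i,j}$ with $R'$ on the origin side; then $\lambda_R=\lambda_{R'}+e_i=\lambda$. The choice of $i$ is guided by the parking condition applied to $I := \{k:\lambda(k)>0\}$, which is nonempty since $|\lambda|>0$: it produces some $i\in I$ with $|\{j\notin I: i\to j\}|\ge \lambda(i)\ge 1$. In particular, there exists $j$ with $\lambda(j)=0$ and a hyperplane $H^a_{i,j}\in\mathcal{A}$, identifying a natural candidate for the wall to be crossed.

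The main obstacle lies in promoting existence of this hyperplane \emph{in the arrangement} to its being a \emph{wall of a specific region} $R'$ realizing $\lambda-e_i$; the induction hypothesis on its own provides no control over wall structure. To bridge this gap I would strengthen the inductive statement, asserting that each parking function $\mu$ is realized by a region $R_\mu$ reachable from $R_0$ by a sequence of outward hyperplane crossings $R_0=R^{(0)}\leadsto R^{(1)}\leadsto\cdots\leadsto R^{(|\mu|)}=R_\mu$, where each crossing is selected using the parking condition applied to the support of the current intermediate label. Extending such a path by one more crossing, dictated by the $i$ chosen above, then produces $R$; the remaining verification — that $H^a_{i,j}$ is actually a wall of $R'$ — reduces to a sign analysis of the linear forms $x_k-x_\ell$ along the constructed path, in which the parking condition is used to rule out competing inward-facing walls of the form $H^b_{k,i}$ being encountered first when moving outward in the $e_i$-direction from $R'$.
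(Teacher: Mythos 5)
First, note that the paper itself does not prove this statement: it is imported from \cite{HP12} and \cite{M17}, so there is no in-paper proof to compare against. Judged on its own terms, your proposal gets the easy parts right --- the base case, the fact that $\lambda-e_i$ is again a $\mathcal{G}_\mathcal{A}$-parking function when $\lambda(i)>0$, and the choice of $i$ via the parking condition applied to $I=\{k:\lambda(k)>0\}$ --- but the step you defer, namely producing a region $R'$ with label $\lambda-e_i$ that has an \emph{outward-facing} wall of the form $H^a_{i,j}$, is the entire content of the theorem, and your sketch does not close it. Concretely: move from a point of $R'$ in the direction that increases $x_i$ (projected to $V$). The forms $x_k-x_\ell$ with $k,\ell\neq i$ are constant along this ray, the forms $x_i-x_j$ increase, and the forms $x_k-x_i$ decrease. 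Hence the only hyperplanes the ray can meet are of two kinds: $H^a_{i,j}$ with $R'$ on the origin side (crossing these is what you want), and $H^b_{k,i}$ with $R'$ on the \emph{far} side (crossing these moves back toward the origin and decrements the $k$th coordinate of the label). If some $k$ with $k\to i$ in $\mathcal{G}_\mathcal{A}$ has $\lambda(k)>0$ and the corresponding separating hyperplane $H^b_{k,i}$ lies close to $R'$, the ray hits the bad wall first. Nothing in your choice of $i$ rules this out --- the parking condition you invoke constrains the out-edges of $i$, not its in-edges --- and "strengthening the induction to a path of outward crossings" merely pushes the same unproven claim into the inductive hypothesis: you must still show that the greedy path can always be extended across a wall of the correct type, and no argument is given. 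This is a genuine gap, not a routine verification.

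For reference, the proofs in \cite{HP12} and \cite{M17} avoid the wall-adjacency issue altogether: they iterate the parking condition (Dhar's burning algorithm) to order the vertices so that each vertex $i$ has at least $\lambda(i)$ out-edges into the set of previously removed vertices, and then construct an explicit point, choosing its coordinates one vertex at a time so that exactly $\lambda(i)$ hyperplanes of the form $H^a_{i,\cdot}$ separate it from the origin. If you wish to salvage your induction, you would need a comparable ordering device to control which hyperplanes $H^b_{k,i}$ are already separating in $R'$; as written, the argument is incomplete at its central step.
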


Combining the above, we get that the generalized Pak-Stanley labeling is a  surjective map from the set of regions of $\mathcal{A}$ to the set of $\mathcal{G}_\mathcal{A}$-parking functions. 

In \cite{HP12} these results were proved in a more restricted context. In \cite{M17} they were generalized to multigraphical arrangements. In the classical case of extended Shi arrangements, one can show the bijectivity of the Pak-Stanley labeling by using the above results and then comparing the cardinalities of the two sets. The bijectivity results can be extended to other families of arrangements (see \cite{DO18}). However, in general the generalized Pak-Stanley labelings  often fail to be injective. Study of the examples suggests that whenever the map is not injective "globally" it is also not injective 'locally."

\begin{definition}\label{local and global}
Let $\mathcal{A}$ be a multigraphical arrangement and let $p\in V$ be any point. We say that the Pak-Stanley labeling for $\mathcal{A}$ is locally injective near $p$ if all labels of regions $R$ such that $p\in\overline{R}$ are distinct. Further, if this holds for all  $p\in V$, we say that $\mathcal{A}$ is locally injective. 
\end{definition}

\begin{conjecture}\label{Conjecture: inject vs local inject}
Let $\mathcal{A}$ be a multigraphical arrangement, then the generalized Pak-Stanley map from the set of regions of $\mathcal{A}$ to the set of parking functions is injective if and only if it is injective locally. 
\end{conjecture}

\begin{remark}
Note that the ``only if'' part of the conjecture is trivial. 
\end{remark}

Furthermore, the examples indicate that a stronger form of Conjecture \ref{Conjecture: inject vs local inject} can be made about the proximity of repeated labels.

\begin{conjecture}\label{conjecture: global}
Let $\mathcal{A}$ be a multigraphical arrangement. Then for a fixed label $\lambda$, the closure of the union of all regions labeled by $\lambda$ is connected.

\end{conjecture}

It is clear from the definition of the Pak-Stanley labeling that the local injectivity near a point $x\in V$ is a local question. More precisely one has the following fact

\begin{lemma}\label{observation: local inject}
Let $\mathcal{A}$ be a multigraphical arrangement and $p\in V$ be a point. Let $\mathcal{A}_p\subset \mathcal{A}$ be the subarrangement consistenting of hyperplanes that contain $p$. Then the Pak-Stanley labeling for $\mathcal{A}$ is locally injective near $p$ if and only if the Pak-Stanley labeling for $\mathcal{A}_p$ is injective. 
\end{lemma}
\begin{proof}
Let $R_0, \dots , R_N$ be all of the regions of $\mathcal{A}$ whose closures contain the point $p$. Let also $R^p_0, \dots, R^p_N$ be the corresponding regions of $\mathcal{A}_p$, i.e. $R_i\subset R^p_i$ for each $i$. Let $\lambda(R_i)$ denote the Pak-Stanley labeling for $\mathcal{A}$ and $\lambda_p(R^p_i)$ denote the Pak-Stanley labeling for $\mathcal{A}_p.$ Let us also assume that the origin belongs to the region $R^p_0,$ so that $\lambda_p(R^p_0)=\pw{0,\ldots,0}.$ According to the inductive definition of the labeling, as one crosses a hyperplane of $\mathcal{A}_p,$ the labels for $\mathcal{A}$ and $\mathcal{A}_p$ change in the same way. Therefore, one gets 
\begin{equation*}
\lambda(R_i)=\lambda_p(R^p_i)+\lambda(R_0)
\end{equation*}
for all $i.$ It follows that $\lambda(R_i)=\lambda(R_j)$ if and only if $\lambda_p(R^p_i)=\lambda_p(R^p_j),$ which concludes the proof. 

\end{proof}

The natural question is to characterize the directed multigraphs for which there exist arrangements with bijective labelings. 
Conjecture \ref{Conjecture: inject vs local inject} and Observation \ref{observation: local inject} motivate studying this question in the special case of central hyperplane arrangements, i.e. arrangements for which all the hyperplanes pass through a common point.

In this paper we answer this question for the special case of central affine multigraphical arrangements, which correspond to acyclic digraphs, by giving necessary and sufficient conditions on the digraph such that the labeling is injective. 

Baker, in \cite{BB19}, has been working on generalizing to arbitrary multigraphical arrangements in the $n=3$ case. She noticed that arrangements with a bijective labeling had a corresponding graph that satisfied the following.
\begin{theorem}\label{bethany}
Suppose $\mathcal{A}$ is a multigrahical arrangment with a bijective Pak-Stanley labeling and $G_{\mathcal{A}}$ is the corresponding graph, then for $i,j,k\in V$ with $m_{ij}\neq 0$, $m_{ik}\neq 0$, then $m_{jk}+m_{kj}\ge m_{ij}+m_{ik}-1$.
\end{theorem}

In her thesis, she showed that the above criterion was necessary, but not sufficient for a graph to yield a bijective labeling by giving several families of graphs that satisfy the conditions of Theorem \ref{bethany} but do not emit arrangements with a bijective labeling. 

If $\mathcal{A}$ is a central multigraphical arrangement, then $G_{\mathcal{A}}$ is a simple acyclic digraph, and the condition of Theorem \ref{bethany} reduces to the following: if both edges $i\to j$ and $i\to k$ are in $G_{\mathcal{A}}$ then $m_{kj}+m_{jk}>1,$ i.e. either $j\to k$ or $k\to j$ is also in $G_{\mathcal{A}}.$ The main result of this paper is that in this case the condition is not only necessary for the bijectivity of the Pak-Stanley labeling of $\mathcal{A}$, but also sufficient  (see Theorem \ref{Theorem: Main}).


\begin{figure} 
\begin{center}
\begin{tikzpicture}[scale=.75]
\draw (-10,-1.732) node {\small $1$};
\draw (-7.5,1.732) node {\small $2$};
\draw (-5,-1.732) node {\small $3$};

\draw [-{Latex[length=2mm, width=2mm]}] (-9.8,-1.732)--(-5.2,-1.732);
\draw [-{Latex[length=2mm, width=2mm]}] (-9.8,-1.632)--(-7.6,1.532);
\draw [-{Latex[length=2mm, width=2mm]}] (-7.7,1.732)--(-10,.-1.532);
\draw [-{Latex[length=2mm, width=2mm]}] (-7.3,1.732)--(-5,-1.532);

\draw (-2,-3.464)--(2,3.464);
\draw (-4.,-3.464)--(0,3.464);
\draw (-2,3.464)--(2,-3.464);
\draw (-4,0)--(2,0);
\draw (-1.5,-.75) node{\small $\pw{0,0,0}$};
\filldraw [fill=black] (-1.5,-.25) circle [radius=0.1];
\draw (0,-1.5) node{\small $\pw{1,0,0}$};
\draw (1.5,-.75) node{\small $\pw{2,0,0}$};
\draw (1.5,.75) node{\small $\pw{2,1,0}$};
\draw (0,1.5) node{\small $\pw{1,1,0}$};
\draw (-1,.25) node{\small $\bf{\pw{0,1,0}}$};
\draw (-3.25,-.5) node{\small $\bf{\pw{0,1,0}}$};
\draw (-2.5,1.5) node{\small $\pw{0,2,0}$};
\draw (-1,3.25) node{\small $\pw{1,2,0}$};
\draw (.4,3.6) node{\small $H_{21}^{a_1}$};
\draw (2.4,3.6) node{\small $H_{12}^{a_2}$};
\draw (2.4,0) node{\small $H_{23}^{c_1}$};
\draw (2.4,-3.6) node{\small $H_{13}^{b_1}$};

\draw (6.5,-3.464)--(10.5,3.464);
\draw (4.5,-3.464)--(8.5,3.464);
\draw (4.5,3.464)--(8.5,-3.464);
\draw (4.5,0)--(10.5,0);
\draw (6.5,-1.75) node{\small $\pw{0,0,0}$};
\filldraw [fill=black] (6.5,-.75) circle [radius=0.1];
\draw (7.5,-3.25) node{\small $\bf{\pw{1,0,0}}$};
\draw (7.5,-.35) node{\small $\bf{\pw{1,0,0}}$};
\draw (8.75,-1.75) node{\small $\pw{2,0,0}$};
\draw (10,.75) node{\small $\pw{2,1,0}$};
\draw (8,1) node{\small $\pw{1,1,0}$};
\draw (6.5,2) node{\small $\pw{1,2,0}$};
\draw (5,.75) node{\small $\pw{0,2,0}$};
\draw (5,-.75) node{\small $\pw{0,1,0}$};
\draw (8.9,3.6) node{\small $H_{21}^{a_1}$};
\draw (10.9,3.6) node{\small $H_{12}^{a_2}$};
\draw (8.9,-3.6) node{\small $H_{13}^{b_1}$};
\draw (10.9,0) node{\small $H_{23}^{c_1}$};
\end{tikzpicture}
\caption{In \cite{BB19} Baker shows that the graph on the left does not emit an arrangement with a bijective labeling despite satisfying conditions listed in Theorem \ref{bethany}. We illustrate this with two arrangements (center and right) corresponding to the graph. In the first arrangement (center) the label $\pw{0,1,0}$ appears twice, while in the second arrangement (right) the label $\pw{1,0,0}$ appears twice. One can modify the arrangements by changing the positive constants $a_1,a_2,b_1,$ and $c_1$ on the right hand sides of the equations of $H_{12}^{a_1},H_{12}^{a_2},H_{12}^{b_1},$ and $H_{12}^{c_1}$, but one cannot get rid of both dublicates at the same time (see \cite{BB19} for details).}\label{Figure: bethany example}
\end{center}
\end{figure}

\section{Central Affine Multigraphical Arrangements.}

\theoremstyle{definition}

In the case of central multigraphical arrangements, the arrangement is fully determined by the corresponding multigraph (up to a global shift). We start by characterizing the multigraphs corresponding to central arrangements.

\begin{theorem}\label{theorem: central linear arrangement}
Let $\mathcal{A}$ be a central multigraphical arrangement, then the corresponding multidigraph is simple and acyclic. Vice versa, if $G$ is a simple acyclic digraph, then there exists a central multigraphical arrangement $\mathcal{A},$ such that $\mathcal{G}_\mathcal{A}=G.$
\end{theorem}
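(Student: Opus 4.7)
The plan rests on translating the central-point condition into a simple arithmetic relation: the hyperplane $H_{i,j}^a$ passes through $p = (p_1,\ldots,p_n)$ if and only if $a = p_i - p_j$. From here, both directions of the theorem fall out with minimal work.

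For the forward direction I would fix a common point $p \in V$ of $\mathcal{A}$. For any edge $i \to j$ of $\mathcal{G}_\mathcal{A}$, the defining parameter $a > 0$ is forced to equal $p_i - p_j$. This single observation yields three conclusions simultaneously: (i) $m_{ij} \le 1$ because $a$ is uniquely determined by the pair $(i,j)$, so no multi-edges occur and $\mathcal{G}_\mathcal{A}$ is simple; (ii) $p_i > p_j$ whenever $i \to j$, since $a>0$; and (iii) any directed cycle $i_1 \to i_2 \to \cdots \to i_k \to i_1$ would produce the impossible chain $p_{i_1} > p_{i_2} > \cdots > p_{i_k} > p_{i_1}$, so $\mathcal{G}_\mathcal{A}$ is acyclic. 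Self-loops are also ruled out because $H_{i,i}^a$ is the empty set for $a\neq 0$.

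For the converse, given a simple acyclic digraph $G$ on $\{1,\ldots,n\}$, I would use acyclicity to pick a topological ordering of the vertices and assign strictly monotonic real numbers $p_1, \ldots, p_n$ along that ordering, so that $p_i > p_j$ whenever $i \to j$ in $G$. Replacing $p$ by $p - \bar{p}\,(1,\ldots,1)$ places $p$ in $V$ without altering any difference $p_i - p_j$. Then define $\mathcal{A}$ to be the collection of hyperplanes $H_{i,j}^{p_i - p_j}$, one per edge of $G$; each such hyperplane contains $p$ by construction, and because $G$ is simple no hyperplane is duplicated and no hyperplanes outside this list are introduced, so $\mathcal{G}_\mathcal{A} = G$.

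I do not anticipate a substantive obstacle; the theorem is essentially the content of the dictionary "central hyperplane through $p$ $\leftrightarrow$ strict inequality $p_i > p_j$ with prescribed gap." The only place requiring mild care is in the converse, where one must ensure the chosen $p_i$ are pairwise distinct and strictly compatible with the topological order, so that each edge of $G$ produces an actual hyperplane (rather than degenerating) and the constructed arrangement recovers $G$ on the nose rather than some proper subgraph.
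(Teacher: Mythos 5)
Your proposal is correct and follows essentially the same route as the paper: the forward direction rests on the observation that a common point $p$ forces $a=p_i-p_j$ (hence simplicity and, via the resulting strict inequalities $p_i>p_j$ along edges, acyclicity — the paper phrases this as summing the cycle's equations to get $0=\sum a_i>0$), and the converse uses a topological order to build one hyperplane per edge through an explicitly chosen point. The only cosmetic difference is that the paper relabels vertices so edges increase and writes down a concrete point $c$, whereas you keep a generic monotone assignment and recenter it onto $V$.
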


\begin{proof}
Let $\mathcal{A}$ be a central multigraphical arrangement such that all hyperplanes intersect at the point $c=(c_1,c_2,\dots, c_n)$. Since all hyperplanes $H_{i,j}^a$ intersect at $c$, then we can have at most one $H_{i,j}^a$ for each pair $i,j$. Moreover, if we have a hyperplane $H_{i,j}^a$ then we cannot have a hyperplane of the form $H_{j,i}^b,$ because they would also be parallel. Thus the digraph $\mathcal{G}_\mathcal{A}$ is simple. \\
\indent Assume that $\mathcal{G}_\mathcal{A}$ contains the cycle ${i_0}\to {i_1}\to\cdots\to {i_k}\to {i_0}$. It then follows that the hyperplanes corresponding to the edges in the cycle exhibit
\begin{center}
\begin{tabular}{rcl}
$x_{i_0}-x_{i_1}$ &=&$a_{1}>0$\\
$x_{i_1}-x_{i_2}$ &=&$a_{2}>0$\\
$\vdots$\hspace{20pt} & $\vdots$ & $\hspace{5pt}\vdots$\\
$x_{i_{k-1}}-x_{i_k}$ &=&$a_{k}>0$\\
$x_{i_{k}}-x_{i_0}$ &=&$a_{k+1}>0$\\
\end{tabular}
\end{center}
Since each hyperplane passes through the point $c$ all these equations are satisfied at $\bf{x}=\bf{c}.$ After taking the sum of the above equations we see that $0=\sum_{i=1}^{k+1}a_{i}$ which contradicts the assumption that the $a_{i}>0$ for all $i$. Thus $\mathcal{G}_\mathcal{A}$ is acyclic.

Now, given an acyclic digraph $G=(V,E)$, with $V=\{1,\dots, n\}$, one can assume without loss of generality that the edges are oriented in an increasing way. We create the corresponding arrangement $\mathcal{A}$ by: for every edge $(i \to j)\in E$ create the hyperplane $H_{i,j}^{j-i}=\{x_i-x_j=j-i\}$. Consider the following point $c\in V$:

$$c=\left(\dfrac{n+1}{2},\dots,\dfrac{n+1}{2} \right)-(1,2,\dots, n)$$

We immediately see that the point $c$ lies in the intersection of all the hyperplanes since $c_i-c_j=j-i$ for all $1\le i<j\le n$. Therefore the graph $G$ has a corresponding central multigraphical arrangement. 
\end{proof}

Let $\mathcal{A}$ be a central multigraphical arrangement, and let $\mathcal{A}'$ be the linear arrangement obtained from $\mathcal{A}$ by shifting all the hyperplanes so that they pass through the origin. Let $G$ be the simple graph obtained from $\mathcal{G}_\mathcal{A}$ by removing the orientations on the edges. Then it is well-known that the acyclic orientations of $G$ are in one to one correspondence with the regions of $\mathcal{A}'$. The bijection is constructed as follows. Given a region $R$ of $\mathcal{A}'$ and an edge $i-j$ of $G,$ we orient it $i \to j$ if and only if $x_i<x_j$ at every point of $R$.

The regions of the original arrangement $\mathcal{A}$ are simply the regions of $\mathcal{A'}$ shifted by a vector. Therefore, they are also in bijection with the acyclic orientations of $G,$ or \textit{acyclic reorientations} of $\mathcal{G}_\mathcal{A}.$

\begin{theorem}\label{edge switch and hyperplane crossing}
The fundamental region of $\mathcal{A}$ corresponds to the original orientation of $\mathcal{G}_\mathcal{A}$, and crossing a hyperplane $H_{i,j}^a\in\mathcal{A}$ switches the orientation of the corresponding edge between $i$ and $j.$
\end{theorem}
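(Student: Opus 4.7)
The plan is to reduce everything to the linear arrangement $\mathcal{A}'$ and its known bijection with acyclic orientations of the underlying undirected graph $G$, by tracking where the shift sends the origin and each hyperplane.

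First I would fix notation for the shift. Let $c=(c_1,\ldots,c_n)$ be the common point of intersection of the hyperplanes of $\mathcal{A}$; then $\mathcal{A}'=\mathcal{A}-c$, and the map $x\mapsto x-c$ is an affine bijection sending regions of $\mathcal{A}$ onto regions of $\mathcal{A}'$. Under this map the hyperplane $H_{i,j}^a=\{x_i-x_j=a\}$ of $\mathcal{A}$ is sent to $\{x_i-x_j=0\}$ of $\mathcal{A}'$, because $c$ satisfies $c_i-c_j=a$.

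Next I would identify the region of $\mathcal{A}'$ that corresponds to the fundamental region $R_0$: it is the region containing $0-c=-c$. For each edge $i\to j$ of $\mathcal{G}_\mathcal{A}$ we have a hyperplane $H_{i,j}^a\in\mathcal{A}$ with $a>0$, so $c_i-c_j=a>0$, hence $(-c)_i<(-c)_j$. By the bijection between regions of $\mathcal{A}'$ and acyclic orientations of $G$ recalled just before the theorem (orient $i\to j$ iff $x_i<x_j$ throughout the region), the orientation attached to the region containing $-c$ is exactly the original orientation of $\mathcal{G}_\mathcal{A}$. This gives the first claim.

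For the second claim, crossing $H_{i,j}^a$ in $\mathcal{A}$ is, under the shift, the same as crossing $\{x_i=x_j\}$ in $\mathcal{A}'$. This is the unique hyperplane of $\mathcal{A}'$ on which the sign of $x_i-x_j$ changes, so the relative orderings of all other coordinate pairs are preserved; hence the corresponding acyclic orientation changes only the direction of the edge between $i$ and $j$. I do not expect any real obstacle here: the two statements are direct bookkeeping once the shift identification and the standard orientation/region bijection for linear graphical arrangements are in hand.
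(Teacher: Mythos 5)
Your proposal is correct and follows essentially the same route as the paper's own proof: shift by the common intersection point $c$, observe that the fundamental region maps to the region of $\mathcal{A}'$ containing $-c$ where $(-c)_i<(-c)_j$ for each original edge $i\to j$, and note that crossing $H_{i,j}^a$ corresponds to crossing $\{x_i=x_j\}$ in $\mathcal{A}'$. Your write-up is in fact slightly more careful than the paper's (which contains a small sign typo, writing $c_i<-c_j$ where $-c_i<-c_j$ is meant), so there is nothing to fix.
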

\begin{proof}
Let $R_0$ be the fundamental region of the arrangement $\mathcal{A},$ and let $\mathcal{A}'$ be the corresponding linear arrangement. Let $c=(c_1,\dots, c_n)$ be in the intersection of all the hyperplanes of the arrangement $\mathcal{A}$.Then it follows that $-c$ belongs to the corresponding region $R'=R_0-c$ of $\mathcal{A}'$. Therefore, if $H_{i,j}^a\in\mathcal{A}$ and the edge $i\to j$ is the corresponding edge in $\mathcal{G}_\mathcal{A}$, then at $c$ we have $c_i-c_j=a$, in particular we have that $c_i>c_j$. It then follows that at $-c\in R'$ that we have $-c_i<-c_j$. Thus, in the orientation corresponding to $R'$ we also get the edge oriented as $i\to j$.

Finally, crossing a hyperplane $H_{i,j}^a$ corresponds to crossing the hyperplane $x_i=x_j$ of the linear arrangement $\mathcal{A'},$ which switches the orientation of the corresponding edge.
\end{proof}



%


\begin{figure}  
\begin{center}
\begin{tikzpicture}[scale=.5]
\filldraw [fill=black] (2,-2) circle [radius=0.1];
\draw (-3,0)--(11,0);  
\draw (1,-5.196)--(7,5.196);   
\draw (1,5.196)--(7,-5.196);   
\draw (7.75,5.196) node{\small $H_{1,2}^a$};
\draw (7.2,-5.5) node{\small $H_{1,3}^b$};
\draw (11.75,0) node{\small $H_{2,3}^c$};
\draw (10,3) node{\small $\pw{2,1,0}$};
\draw (7,3.53) node{\small $1$};
\draw (5.53,1.196) node{\small $2$};
\draw (8.47,1.196) node{\small $3$};
\draw (18,3) node{\small $x_3<x_2<x_1$};
\draw [-{Latex[length=2mm, width=2mm]}] (13,2)--(19,5);
\draw [-{Latex[length=2mm, width=2mm]}] (8.45,1.5)--(7.2,3.3);
\draw [-{Latex[length=2mm, width=2mm]}] (8.2,1.1)--(5.8,1.1);
\draw [-{Latex[length=2mm, width=2mm]}] (5.53,1.5)--(6.8,3.3);
\draw (10,-4.5) node{\small $\pw{2,0,0}$};
\draw (8,-1) node{\small $1$};
\draw (6.5,-3) node{\small $2$};
\draw (9.5,-3) node{\small $3$};
\draw [-{Latex[length=2mm, width=2mm]}] (9.3,-2.7)--(8.3,-1.3);
\draw [-{Latex[length=2mm, width=2mm]}] (6.8,-3)--(9.2,-3);
\draw [-{Latex[length=2mm, width=2mm]}] (6.8,-2.7)--(7.8,-1.3);
\draw (4,-6) node{\small $\pw{1,0,0}$};
\draw (4,-1.53) node{\small $1$};
\draw (2.53,-4.196) node{\small $2$};
\draw (5.47,-4.196) node{\small $3$};
\draw [-{Latex[length=2mm, width=2mm]}] (4.2,-1.83)--(5.46,-3.796);
\draw [-{Latex[length=2mm, width=2mm]}] (2.53,-3.796)--(3.9,-1.83);
\draw [-{Latex[length=2mm, width=2mm]}] (2.73,-3.896)--(5.26,-3.896);
\draw (-1,-4.5) node{\small $\pw{0,0,0}$};
\draw (-8,-4) node{\small $x_1<x_2<x_3$};
\draw (0,-1) node{\small $1$};
\draw (-1.47,-3) node{\small $2$};
\draw (1.47,-3) node{\small $3$};
\draw [-{Latex[length=2mm, width=2mm]}] (-5,-4)--(-11,
-7);
\draw [-{Latex[length=2mm, width=2mm]}] (-1.17,-3)--(1.17,-3);
\draw [-{Latex[length=2mm, width=2mm]}] (-.3,-1.3)--(-1.17,-2.7);
\draw [-{Latex[length=2mm, width=2mm]}] (.3,-1.3)--(1.17,-2.7);
\draw (-1,4.5) node{\small $\pw{0,1,0}$};
\draw (0,3.53) node{\small $1$};
\draw (-1.47,1.196) node{\small $2$};
\draw (1.47,1.196) node{\small $3$};
\draw [-{Latex[length=2mm, width=2mm]}] (1.17,1.196)--(-1.17,1.196);
\draw [-{Latex[length=2mm, width=2mm]}] (-.3,3.3)--(-1.3,1.6);
\draw [-{Latex[length=2mm, width=2mm]}] (.3,3.3)--(1.47,1.5);
\draw (4,6.5) node{\small $\pw{1,1,0}$};
\draw (4,5.5) node{\small $1$};
\draw (2.53,3.5) node{\small $2$};
\draw (5.47,3.5) node{\small $3$};
\draw [-{Latex[length=2mm, width=2mm]}] (5.17,3.5)--(2.8,3.5);
\draw [-{Latex[length=2mm, width=2mm]}] (5.5,3.8)--(4.3,5.3);
\draw [-{Latex[length=2mm, width=2mm]}] (3.7,5.2)--(2.7,3.8);
\end{tikzpicture}
\end{center}
\caption{We consider the central arrangement corresponding to the digraph $\mathcal{G}_\mathcal{A}=(1\to 2,1 \to 3,2\to 3).$ The regions of the arrangement are labeled by the corresponding reorientations and the generalized Pak-Stanley labels. Note that the fundamental region is labeled by $\mathcal{G}_\mathcal{A}$ and $\pw{0,0,0},$ and as we cross the hyperplanes the orientations of the corresponding edges switch. Moreover, as we cross the hyperplane $H_{i,j}^a$ in a direction away from the origin, the $i$th entry of the Pak-Stanley label increases by 1.}\label{Figure: arrangement with labels}
\end{figure}

\begin{lemma}
The Pak-Stanley labels for the arrangement $\mathcal{A}$ can be computed in terms of acyclic reorientations of the graph $\mathcal{G}_\mathcal{A}$. More precisely, for a region $R$ of $ \mathcal{A}$ the label $\lambda_R(i)$ equals to the number of edges of $\mathcal{G}_\mathcal{A}$ leading from $i,$ such that their orientations got switched in the reorientation corresponding to $R$.
\end{lemma}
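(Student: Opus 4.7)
The plan is to establish a direct correspondence between the hyperplanes of $\mathcal{A}_R$ and the edges of $\mathcal{G}_\mathcal{A}$ whose orientation has been switched in the reorientation associated to $R$, and then simply count. The two main tools are already in place: Theorem \ref{edge switch and hyperplane crossing} says that the fundamental region $R_0$ corresponds to the original orientation of $\mathcal{G}_\mathcal{A}$ and that crossing any hyperplane $H_{i,j}^a$ flips the orientation of the corresponding edge, while the inductive description of the Pak-Stanley labeling says that crossing $H_{i,j}^a$ in the direction away from the origin increases the $i$-th coordinate of the label by exactly one.

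First I would prove the following sub-claim: for any region $R$, the edges of $\mathcal{G}_\mathcal{A}$ that have been switched in the reorientation associated to $R$ are exactly those whose hyperplanes lie in $\mathcal{A}_R$. To do this I would fix a generic line segment $\gamma$ from the origin to an interior point of $R$, chosen so that $\gamma$ avoids every codimension-$2$ flat of $\mathcal{A}$. Since $\gamma$ is contained in a line and each hyperplane has codimension one, every hyperplane of $\mathcal{A}$ meets $\gamma$ in at most one point, and the hyperplanes actually met are precisely those of $\mathcal{A}_R$. Because $\gamma$ starts at the origin and moves outward, every such crossing is in the direction away from the origin, so by Theorem \ref{edge switch and hyperplane crossing} each crossing flips the orientation of the corresponding edge. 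As each separating hyperplane is crossed exactly once, the set of switched edges in $R$'s reorientation coincides with the set of edges whose hyperplanes lie in $\mathcal{A}_R$.

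With the sub-claim in hand the lemma becomes a matter of definition chasing. By the construction of $\mathcal{G}_\mathcal{A}$, each hyperplane $H_{i,j}^a$ corresponds to an edge whose original orientation leads from $i$ to $j$. Hence the number of hyperplanes $H_{i,j}^a$ in $\mathcal{A}_R$ (with $i$ fixed, $j$ and $a$ varying) equals the number of edges leading from $i$ in $\mathcal{G}_\mathcal{A}$ whose orientation has been switched in the reorientation corresponding to $R$. By definition, the left-hand side is exactly $\lambda_R(i)$, and the lemma follows.

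The only non-trivial step is the geometric one: choosing a path from the origin to $R$ that meets each hyperplane in $\mathcal{A}_R$ in a single transverse point and avoids all codimension-$2$ intersections. This is handled by a standard genericity argument, since the regions of $\mathcal{A}$ are open and can always be reached by a sufficiently generic straight segment. Everything else is a direct translation between the two parallel update rules governed, via Theorem \ref{edge switch and hyperplane crossing} and the definition of the labeling, by the very same hyperplane crossings.
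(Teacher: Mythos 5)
Your proof is correct and follows essentially the same route as the paper's: combine Theorem \ref{edge switch and hyperplane crossing} with the definition of $\lambda_R$ as a count of separating hyperplanes of the form $H_{i,j}^a$. The only difference is that you make explicit, via the generic-segment argument, the identification of the switched edges with the hyperplanes in $\mathcal{A}_R$, a step the paper's two-sentence proof leaves implicit.
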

\begin{proof}
For an arrangement $\mathcal{A}$ the Pak-Stanley label for a region $R$ is calculated by counting the number of hyperplanes of the form $H_{i,j}^a$ separating $R$ from the origin and increasing the value $\lambda_R(i)$ accordingly. However, Theorem \ref{edge switch and hyperplane crossing} implies that as we cross a hyperplane $H_{i,j}^a$ we reorient the edge from $(i\to j)$ to $(j\to i)$, so it follows that $\lambda_R(i)$ is the number of edges of $\mathcal{G}_\mathcal{A}$ leading from $i$ that get reoriented in the graph corresponding to $R$.
\end{proof}


Now we are ready to prove our main theorem:

\begin{theorem}\label{Theorem: Main}
Let $V=\{1,2,\dots, n\}$ and $G=(V,E)$ be an acyclic directed graph on $n$ vertices with edges oriented in the increasing way. Then the hyperplane arrangement corresponding to $G$ produces duplicate Pak-Stanley labelings if and only if there exists $1\le k <i<j\le n$ such that $(k\to i),(k\to j)\in E$ and $(i\to j)\notin E$.
\end{theorem}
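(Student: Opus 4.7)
The plan is to prove the two implications separately, throughout using the preceding lemma that identifies regions of $\mathcal{A}$ with acyclic reorientations of $G$ and equates $\lambda_R(v)$ with the number of original out-edges of $v$ in $G$ that get flipped in the reorientation $R$. For the ``if'' direction, suppose $k<i<j$ with $(k\to i),(k\to j)\in E$ and $(i\to j)\notin E$. I will construct two distinct acyclic reorientations $R_1\neq R_2$ with the same label. Let $G'$ denote the underlying graph of $G$ with the two edges $\{k,i\}$ and $\{k,j\}$ removed; note that no edge of $G'$ has both endpoints in $\{k,i,j\}$. Define an orientation $R_{\mathrm{rest}}$ of $G'$ by directing every edge incident to $\{k,i,j\}$ into $\{k,i,j\}$ and every other edge in the natural increasing way. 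This is acyclic: the topological ordering $V\setminus\{k,i,j\}$ followed by $k,i,j$ makes every edge forward. Now set $R_1:=R_{\mathrm{rest}}\cup\{k\to i,\,j\to k\}$ and $R_2:=R_{\mathrm{rest}}\cup\{i\to k,\,k\to j\}$. Since $k,i,j$ are sinks of $R_{\mathrm{rest}}$, no cycle can close through the new edges, so $R_1$ and $R_2$ are both acyclic. They differ only on $\{k,i\}$ and $\{k,j\}$, both of which are out-edges of $k$ in $G$; in each $R_\ell$ precisely one of them is flipped, so they contribute equally to $\lambda(k)$, and labels at every other vertex agree. Hence $\lambda_{R_1}=\lambda_{R_2}$, producing a duplicate label.

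For the ``only if'' direction, I assume no such forbidden triple exists---equivalently, for every vertex $v$ the out-neighborhood $N^+_G(v)$ is a clique in $G$---and prove injectivity by induction on $n$. Remove vertex $1$ to obtain $G_2$ on $\{2,\dots,n\}$, which inherits the hypothesis. Given acyclic reorientations $R_1,R_2$ of $G$ with $\lambda_{R_1}=\lambda_{R_2}$, the restrictions $R_i|_{G_2}$ are acyclic reorientations of $G_2$ with equal labels (because every out-edge of any $v\geq 2$ in $G$ already lies in $G_2$). By induction, $R_1|_{G_2}=R_2|_{G_2}=:R'$, so $R_1$ and $R_2$ can differ only on edges incident to vertex $1$. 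Now $N_G(1)=N^+_G(1)$ is a clique, and $R'$ restricted to it is a linear order $w_1<w_2<\cdots<w_d$. A key observation is that $R'$-reachability on $\{w_1,\dots,w_d\}$ coincides with this linear order---any reverse reachability through outside vertices would combine with a direct clique edge of $R'$ to produce a cycle, contradicting acyclicity of $R'$. Consequently every acyclic extension $R_i$ of $R'$ must orient the edges at $1$ in a monotone fashion: there exists $c$ with $w_\ell\to 1$ for $\ell\leq c$ and $1\to w_\ell$ for $\ell>c$, since any inversion would create a cycle $1\to w_{\ell'}\to w_\ell\to 1$ with $\ell<\ell'$. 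The threshold equals $c=\lambda_{R_i}(1)$, so the equality of labels at vertex $1$ forces $R_1,R_2$ to agree on the remaining edges, yielding $R_1=R_2$.

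The main subtle point lies in the backward direction: ruling out non-monotone extensions of $R'$ requires the reachability observation above, which relies jointly on the clique hypothesis and the acyclicity of $R'$. The forward construction is conceptually easier once one realizes that naively flipping only $\{k,i\}$ or only $\{k,j\}$ in the all-increasing orientation can create a cycle through preexisting $G$-paths, so the background orientation must be chosen to render $k,i,j$ all sinks.
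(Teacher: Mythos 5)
Your proof is correct, but both halves take a genuinely different route from the paper. For the ``only if'' direction the paper does not induct: it takes two equally labelled acyclic reorientations, chooses a \emph{maximal} vertex $k$ having two out-edges $k\to i$, $k\to j$ flipped in opposite patterns (such a pair exists because the labels agree at $k$), uses maximality of $k$ to conclude that the edge between $i$ and $j$ --- which must exist when there is no forbidden triple --- is oriented the same way in both reorientations, and then exhibits a $3$-cycle in one of them. Your induction on $n$ (peel off vertex $1$, note $N^+_G(1)$ is a clique that $R'$ orders linearly, and observe that acyclicity forces the flipped edges at $1$ to form an initial segment of that order, whose length is exactly $\lambda(1)$) is a more structural argument that in addition explains how the label reconstructs the reorientation; the price is the bookkeeping verifying that restriction to $\{2,\dots,n\}$ preserves both acyclicity and labels. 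Two cosmetic points there: the reachability observation is not actually needed, since the direct clique edges already supply the cycle, and the offending cycle should be $1\to w_{\ell'}\to w_\ell\to 1$ with $\ell'<\ell$ (your inequality is reversed). For the ``if'' direction the paper also builds an explicit pair, but it balances $\lambda(k)$ by reversing all edges from $k$ and from $i$ into the interval between $k$ and $j$, producing the explicit duplicate label with entries $j-k-1$ and $j-i-1$ at positions $k$ and $i$; your construction --- make $k,i,j$ sinks of the rest of the orientation and then flip exactly one of the two edges $\{k,i\}$, $\{k,j\}$ --- is easier to certify acyclic and, unlike the paper's formula for the duplicate label, does not tacitly assume that all the intermediate edges $k\to m$ and $i\to m$ are present in $E$.
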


\begin{proof}[Proof of Theorem \ref{Theorem: Main}]

$\Rightarrow)$ Assume that $G$ produces duplicate Pak-Stanley labelings and for the sake of contraction assume that no such $i,j,k$ exists. Since labelings correspond to acyclic reorientations of $G$, let $G'=(V,E')$ and $G''=(V,E'')$ be such reorientations. \\
\indent Since reorientations are in correspondence with labelings then  there is an edge $k\to i$ of $\mathcal{G}_\mathcal{A}$ that is reoriented as $i \to k$ in $G'$ but not in $G''$. Moreover since the labels are equal, then there must also be another edge emanating from $k$, say edge $k\to j$, such that it is reoriented as $j \to k$ in $G''$ but not in $G'$.  In other words, the duplicate labeling implies that we have edges $(i\to k),(k\to j)\in E'$ and $(k\to i),(j\to k)\in E''$.\\
\indent Let $k$ be the largest integer such that this occurs. Since $k$ is the largest possible, it follows that all edges between vertices $p,q$ where $p,q>k$ are oriented in the same way in both reorientations. Without loss of generality we can assume that $i<j$. This gives arise to two cases depending on whether or not the edge from $i \to j$,  is oriented as $i \to j$ or $j\to i$ in both $G'$ and $G''$. If we have the edge $i \to j$ then in $G''$ we have the cycle $k \to i\to j \to k$, a contradiction since $G$-parking functions arise from acyclic reorientations. Otherwise we have the edge $j\to i$, but as before we have the cycle $k \to j\to i\to k$ in $G'$ (see Figure \ref{Figure: cycles in reorientations}).

\begin{figure}\label{Figure: cycles in reorientations}
\begin{center}
\begin{tikzpicture}   

\draw (15, -6) node {\large $G''$};
\draw (11,-8.3) node {\small $k$};
\draw (12.5,-8.3) node {\small $\dots$};
\draw (14,-8.3) node {\small $i$};
\draw (15,-8.3) node {\small $\dots$};
\draw (16,-8.3) node {\small $j$};

\draw [][-{Latex[red,length=2mm, width=2mm]}] (16,-7.7) .. controls (13.5,-6) .. (11.1,-7.8);
\draw [-{Latex[length=2mm, width=2mm]}] (11.1,-8) .. controls (12.5,-7) .. (14,-8);
\draw [-{Latex[length=2mm, width=2mm]}] (14.1,-8) .. controls (15,-7.5) .. (16,-8);

\draw (2, -6) node {\large $G'$};
\draw (1,-8.3) node {\small $k$};
\draw (2.5,-8.3) node {\small $\dots$};
\draw (4,-8.3) node {\small $i$};
\draw (5,-8.3) node {\small $\dots$};
\draw (6,-8.3) node {\small $j$};

\draw [-{Latex[length=2mm, width=2mm]}] (1.1,-7.8) .. controls (3.5,-6) .. (6,-7.7);
\draw [][-{Latex[red,length=2mm, width=2mm]}] (4,-8) .. controls (2.5,-7) .. (1.1,-8);
\draw [-{Latex[length=2mm, width=2mm]}] (6,-8).. controls (5,-7.5) .. (4.1,-8);



\end{tikzpicture}
\end{center}
\caption{Here we see the two reorientations of the graph $G$, $G'$ and $G''$, and the corresponding cycles created depending on the orientation of the edge $i\to j$.}
\end{figure}


$\Leftarrow)$ The easiest way to produce the acyclic reorientations, $G'$ and $G'',$ is reordering the vertices and reorienting the edges so that they point in the increasing direction after considering the new vertex order. For the reoriented graph $G'=(V,E')$ we reorder the vertices of $G'$ as follows $$1\prec \dots \prec k-1\prec k+1\prec \dots \prec i-1\prec i+1\prec\dots\prec j-1\prec i\prec k\prec j\prec \dots\prec n. $$
In other words, for $G'$ we move the vertices $k+1, \dots,i-1,i+1,\dots,j-1$ to the left so that they precede vertex $k$, and then switch vertices $k$ and $i$. Note that as we reorder the vertices, the only edges that are reversed are
\begin{center}
    \begin{tabular}{lll}
    (1) &$(k\to p)\in E$  such that :& \\
   & & $p\in\{k+1,\dots, i-1\}$, or\\
   & & $p\in\{i+1,\dots, j-1\}$, or\\
   && $p=i$\\\\
    (2) &$(i\to p)\in E$ such that: & $p\in\{i+1,\dots, j-1\}$.
    \end{tabular}
\end{center}
To produce the reorientation that corresponds to $G''=(V,E'')$ we reorder the vertices of $G''$ as follows:
$$1\prec \dots \prec k-1\prec k+1\prec\dots\prec i-1\prec i+1\prec \dots\prec j-1\prec j\prec k\prec i\prec j+1\prec \dots\prec n.$$ 
In other words, for $G''$ we move the vertices $k+1,\dots, i-1,i+1,\dots, j-1$ so that they precede vertex $k$, but now we move vertex $j$ two places to the left so that it precedes $k$ instead of switching vertices $k$ and $i$. This time the following edges are reoriented
\begin{center}
    \begin{tabular}{lll}
    (1) &$(k\to p)\in E$  such that :& \\
   & & $p\in\{k+1,\dots, i-1\}$, or\\
   & & $p\in\{i+1,\dots, j-1\}$, or\\
   && $p=j$\\\\
    (2) &$(i\to p)\in E$ such that: & $p\in\{i+1,\dots, j-1\}$.
    \end{tabular}
\end{center}
Note that $(i\to j)\notin E$ by assumption, therefore it does not need to be reoriented. \\
We conclude that both $G'=(V,E')$ and $G''=(V,E'')$ produce the labeling
\[\tau=\pw{0,\dots,0, \overset{k\text{th}}{(N+1)},0,\dots,0, \overset{i\text{th}}{(K)},0,\dots,0}\]
where \[N=\#\{(k\to p)\in E: p\in\{k+1,\dots, i-1\}\cup\{i+1,\dots, j-1\}\}\]and \[K=\#\{(i\to p)\in E: p\in\{i+1,\dots, j-1\}\}.\]
\end{proof}


\begin{example}
Consider the following graph $G=(V,E)$ where the vertex and edge sets are given by $V=\{1,2,3,4\}$ and $E=\{(1\to 2),(1\to 3),(1\to 4),(2\to 3),(2\to 4)\}$.

\begin{center}
\begin{tikzpicture}[scale=.75]
\draw (11,-8.3) node {\small $1$};
\draw (13,-8.3) node {\small $2$};
\draw (15,-8.3) node {\small $3$};
\draw (17,-8.3) node {\small $4$};

\draw [-{Latex[length=2mm, width=2mm]}] (11.1,-8) .. controls (14,-6) .. (16.9,-7.7);
\draw [-{Latex[length=2mm, width=2mm]}] (11.1,-8) .. controls (13,-7) .. (14.9,-8);
\draw [-{Latex[length=2mm, width=2mm]}] (13.1,-8) .. controls (15,-7) .. (16.9,-7.9);
\draw [-{Latex[length=2mm, width=2mm]}] (11.2,-8.3)--(12.6,-8.3);
\draw [-{Latex[length=2mm, width=2mm]}] (13.2,-8.3)--(14.6,-8.3);

\end{tikzpicture}

\end{center}

In this example we see that $(1\to 3)$ and $(1\to 4)$, but $(3\to 4)\notin E$, so Theorem \ref{Theorem: Main} implies that there should exist two reorientations $G'$ and $G''$ that produce the same Pak-Stanley labeling. Consider the following reorientations

\begin{center}
\begin{tikzpicture}[scale=.75]
\draw (11,-8.3) node {\small $1$};
\draw (13,-8.3) node {\small $2$};
\draw (15,-8.3) node {\small $3$};
\draw (17,-8.3) node {\small $4$};

\draw [-{Latex[red,length=2mm, width=2mm]}] (16.9,-7.7) .. controls (14,-6) .. (11.1,-7.8);
\draw [-{Latex[length=2mm, width=2mm]}] (11.1,-8) .. controls (13,-7) .. (14.9,-8);
\draw [-{Latex[length=2mm, width=2mm]}] (13.1,-8) .. controls (15,-7) .. (16.9,-7.9);
\draw [-{Latex[red,length=2mm, width=2mm]}] (12.6,-8.3)--(11.2,-8.3);
\draw [-{Latex[length=2mm, width=2mm]}] (13.2,-8.3)--(14.6,-8.3);

\draw (1,-8.3) node {\small $1$};
\draw (3,-8.3) node {\small $2$};
\draw (5,-8.3) node {\small $3$};
\draw (7,-8.3) node {\small $4$};

\draw [-{Latex[length=2mm, width=2mm]}] (1.1,-7.8) .. controls (4,-6) .. (6.9,-7.7);
\draw [-{Latex[red,length=2mm, width=2mm]}] (4.9,-8) .. controls (3,-7) .. (1.1,-8);
\draw [-{Latex[length=2mm, width=2mm]}] (3.1,-8) .. controls (5,-7) .. (6.9,-7.9);
\draw [-{Latex[red,length=2mm, width=2mm]}] (2.6,-8.3)--(1.2,-8.3);
\draw [-{Latex[length=2mm, width=2mm]}] (3.2,-8.3)--(4.6,-8.3);

\end{tikzpicture}

\end{center}

These two reorientations of $G_{\mathcal{A}}$ produce the label $\pw{2,0,0,0}$. Similarly for $(2\to 3),(2\to 4)\in E$, but $(3\to 4)\notin E$ there will be duplicates

\begin{center}
\begin{tikzpicture}[scale=.75]
\draw (11,-8.3) node {\small $1$};
\draw (13,-8.3) node {\small $2$};
\draw (15,-8.3) node {\small $3$};
\draw (17,-8.3) node {\small $4$};

\draw [-{Latex[length=2mm, width=2mm]}] (11.1,-7.8) .. controls (14,-6) .. (16.9,-7.7);
\draw [-{Latex[length=2mm, width=2mm]}] (11.1,-8) .. controls (13,-7) .. (14.9,-8);
\draw [-{Latex[length=2mm, width=2mm]}] (13.1,-8) .. controls (15,-7) .. (16.9,-7.9);
\draw [-{Latex[length=2mm, width=2mm]}] (11.2,-8.3)--(12.6,-8.3);
\draw [-{Latex[red,length=2mm, width=2mm]}] (14.6,-8.3)--(13.2,-8.3);

\draw (1,-8.3) node {\small $1$};
\draw (3,-8.3) node {\small $2$};
\draw (5,-8.3) node {\small $3$};
\draw (7,-8.3) node {\small $4$};

\draw [-{Latex[length=2mm, width=2mm]}] (1.1,-7.8) .. controls (4,-6) .. (6.9,-7.7);
\draw [-{Latex[length=2mm, width=2mm]}] (1.1,-8) .. controls (3,-7) .. (4.9,-8);
\draw [-{Latex[red,length=2mm, width=2mm]}] (6.9,-7.9) .. controls (5,-7) .. (3.1,-8);
\draw [-{Latex[length=2mm, width=2mm]}] (1.2,-8.3)--(2.6,-8.3);
\draw [-{Latex[length=2mm, width=2mm]}] (3.2,-8.3)--(4.6,-8.3);

\end{tikzpicture}

\end{center}

These two reorientations of $G$ produce the duplicate label $\pw{0,1,0,0}$. Actually, this graph produces four more duplicate labelings \[\{\pw{1,1,0,0}, \pw{2,1,0,0}, \pw{1,2,0,0}, \pw{3,1,0,0}\}.\]
\end{example}


\end{document}